\newtheorem{theorem}{Theorem}[section]
\title[On the Distributions of Product and Quotient..]{On the Distributions of Product and Quotient of two Independent $\hat{I}$-function variates}
\author[Vilma D'Souza, Shantha Kumari. K. and Arjun K. Rathie]{Vilma D'Souza$^{1}$, Shantha Kumari Kurumujji$^{1^*}$, Arjun K. Rathie$^{2}$}
\address{$^1$ Department of Mathematics, A J Institute of Engineering and Technology, Mangaluru-575006(Affiliated to Visvesvaraya Technological University (VTU), Belagavi),   Karnataka, INDIA.}
\email{dsouzavilma12@gmail.com, shanthakk99@gmail.com}
\address{$^2$ Department of Mathematics, Vedant College of Engineering \& Technology (Rajasthan Technical    University), Village: Tulsi, Post : Jakhamund, Dist. Bundi, Rajasthan State, India}
\email{arjunkumarrathie@gmail.com}
\thanks{$*$ Corresponding Author}
\begin{document}
\maketitle
\thispagestyle{plain}
\begin{abstract}
	The study of probability distributions for random variables and their algebraic combinations has been a central focus driving the advancement of probability and statistics.  Since the 1920s, the challenge of calculating the probability distributions of sums, differences, products, and quotients of independent random variables have drawn the attention of numerous statisticians and mathematicians who studied the algebraic properties and relationships of random variables.  Statistical distributions are highly helpful in data science and machine learning, as they provide a range of possible values for the variables,  aiding in the development of a deeper understanding of the underlying problem. In this paper, we have presented a new probability distribution based on the $\hat{I}$-function. Also, we have discussed the applications of the $\hat{I}$ function, particularly in deriving  the distributions of product and the quotient involving two independent $\hat{I}$ function variates. 
Additionally, it has been shown that both the product and quotient of two independent $\hat{I}$-function variates also follow the $\hat{I}$-function distribution. Furthermore, the new distribution, known as the $\hat{I}$-function distribution, includes several well-known classical distributions such as the gamma, beta, exponential, normal H-function, and G-function distributions, among others, as special cases.   Therefore, the $\hat{I}$-function distribution can be considered a characterization or generalization of the above-mentioned distributions. 
\end{abstract} %%% the abstract
\subjclass{33C60, 44A20, 60E05}  %%%% AMS subject classifications 2020
\keywords{I function, Distributions, Mellin Transform, Random variable}        %%%% Keywords
 %%%%%%%%%%%%%%%%%%%%%%%%%%%%%%%%%%%
\section{Introduction}
The study of special functions and their distributions is a fundamental area of  mathematical analysis, with numerous applications in statistics, physics, and engineering. Among these special functions, the H-function has been extensively researched for its rich algebraic and analytical properties. The H-function is interconnected with several other important mathematical functions. It is related to the generalized hypergeometric function ${}_pF_q$, the Meijer G-function, the generalized Wright function ${}_p\psi_q$, and the Meiger's G-function\cite{luke}. These relationships are thoroughly documented in the works of Mathai et al. \cite{mathai, mathai2010}, providing a comprehensive framework for understanding the H-function and its various applications.  Subsequent research into the H-function distribution was carried out by Cook \cite{cook}, who examined its algebraic properties, and later by Bodenschatz \cite{boden}, who further explored its statistical characteristics. Their studies provided  a more comprehensive understanding of  the mathematical structure and potential applications of the H-function. 

 Inayat-Hussain \cite{inayat} introduced a significant generalization of the H-function, known as the $\bar{H}$-function.  Further generalization beyond the $\bar{H}$-function has been provided by Rathie\cite{rathie1997}, who extended the framework to include a broader class of special functions, including the $\bar{H}$-function,  generalized Riemann zeta function, the polylogarithmic function of complex order, and the exact partition function of the Gaussian free energy model in statistical mechanics, which are not special cases of the H-function.    The extension of the I-function to two variables and its
 properties were studied by shantha et al.\cite{shanthalemat}.  
 
 Recent advancements have focused on novel methods for generating Fox's H-function distributed random variables, essential for modeling fading in wireless communication. These methods employ standard normal and Gamma random variables, validated via Monte Carlo simulations to closely approximate analytical H-function probability density functions (PDFs) across various parameters\cite{yousuf2019}. This is a topic of considerable interest due to its applications in various domains, including fading channels within wireless communication fields. The applications of I-function to wireless communication were also studied by Ansari et al.\cite{ansari2013, ansari2017}.  Motivated by this, our paper focuses on obtaining the distributions of the product and quotient of two independent random variables having densities in terms of the $\hat{I}$ - functions (which is a particular case of the Rathie's ${I}$ - function).  For another form of distribution, readers are suggested to refer to \cite{kataria}.

In 1997, Rathie introduced the I-function\cite{rathie1997} as a generalization of the H-function defined by Fox\cite{fox} and studied by Braaksma\cite{braaksma}. It is represented by the following Mellin-Barnes type contour integral: 
\begin{align} \label{I}
\qquad \qquad \mathrm{I_{p,q}^{m,n}}(z) & \equiv \mathrm{I_{p,q}^{m,n}}\left[ \begin{array}{c} z \end{array} \left|\begin{array}{l} \left(a_1, e_1, A_1),\ldots, (a_p, e_p, A_p \right)\\ \left(b_1,f_1, B_1), \ldots, (b_q, f_q, B_q\right) \end{array}  \right.\right]   \nonumber \\
%& \equiv \mathrm{H_{p,q}^{m,n}}\left[ \begin{array}{c} z \end{array} \left| \begin{array}{l} \left(a_1, e_1\right), \ldots, \left(a_p, e_p\right)\\ \left(b_1,f_1\right),\ldots,\left(b_q, f_q\right) \end{array} \right.\right]  \nonumber \\
& = \frac{1}{2\pi i} \int_{\mathcal{L}} \theta(s) z^s ds 
\end{align}
where 
\begin{equation} \label{Iphi}
\qquad \qquad \theta(s) = \frac{\prod_{j=1}^m \Gamma^{B_j} \left(b_j-f_js\right) \prod_{j=1}^n\Gamma^{A_j} \left(1-a_j+e_js\right)}{\prod_{j=m+1}^q \Gamma^{B_j} \left(1-b_j+f_js\right) \prod_{j=n+1}^p\Gamma^{A_j} \left(a_j-e_js\right)}
\end{equation}
%=
Also 
\begin{enumerate}[(i)]
\item $i$ is the imaginary unit, $i=\sqrt{-1}$;
\item $z$ is a non zero  complex variable, signifying the argument of the I-function;  ;
\item $ m,n,p,q$ are integers satisfying the constraints,  $0 \leq m \leq q$,  $0\leq n \leq p$; 
\item  The contour $\mathcal{L}$ is a carefully chosen path in the complex plane for the integration;
\item an empty product is to be interpreted as unity; 
\item The parameters $e_j$ and $f_j$, with $j$ ranging from $1$ to $p$ and $q$ respectively, represent positive numbers;  
\item The parameters $A_j$ and $B_j$, with $j$ ranging from $1$ to $p$ and $q$ respectively, also denote positive numbers; 
\item The parameters $a_j$ and $b_j$, with $j$ ranging from $1$ to $p$ and $q$, respectively, are complex numbers, subject to the condition that no singularity of $\Gamma^{B_j} \left(b_j-f_js\right)$, for $j=1,\ldots,m$, coincides with any singularity of $\Gamma^{A_j} \left(1-a_j+e_js\right)$, for $j=1,\ldots,n$. It is important to note that these singularities are not necessarily poles.
\item  Additionally, the contour $\mathcal{L}$ is described to go from $\sigma - i \infty$ to $\sigma + i\infty$ ($\sigma$ real), with the requirement that the singularities of $\Gamma^{B_j} \left(b_j-f_js\right)$ for $j=1,\ldots,m$ lie to the right of $\mathcal{L}$, and the singularities of $\Gamma^{A_j}\left(1-a_j+e_js\right)$ for $j=1,\ldots,n$ lie to the left of $\mathcal{L}$. 
 \end{enumerate}
In a compact notation, \eqref{I} is represented as,
\begin{equation*} 
  \mathrm{I_{\:p,\;q}^{\:m,\;n}} \left[\begin{array} {c} z \end{array}  \left| \begin{array}{l} _1(a_j,e_j,A_j)_p\\ _1(b_j,f_j,B_j)_q  \end{array} \right. \right] 
\end{equation*}
The function defined by \eqref{I} is convergent if 
\begin{equation} \label{Icond}
  \Delta>0, \quad |arg (z)|<\frac{1}{2}\Delta\pi,
\end{equation}
 where
\begin{equation} \label{Idelta}
  \Delta = \sum_{j=1}^m B_jf_j - \sum_{j=m+1}^q B_jf_j + \sum_{j=1}^n A_je_j - \sum_{j=n+1}^p A_je_j.
\end{equation}
When $A_1= A_2 =\cdots = A_p= 1= B_1= B_2= \cdots = B_q$, \eqref{I} reduces to the H-function introduced by Fox\cite{fox} and studied by Braaksma\cite{braaksma}.\\

When the exponents $A_j=1$ for $j=1,2,\cdots,n$ and $B_j=1$ for $j=1,2,\cdots,m$, the $I$-function defined by \eqref{I} reduces to $\hat{I}$-function, which is represented by the following Mellin Barnes  type contour integral:
 \begin{align}
 \label{hatI}
 \hat{I}(z) & = \hat{I}_{\;p,\; q}^{\;m,\; n} \left[ z \left |\begin{array}{c} _1(a_j, e_j, 1)_n,\; _{n+1}(a_j, e_j,A_j)_p\\ _1(b_j, f_j, 1)_m,\; _{m+1}(b_j, f_j, B_j)_q\end{array} \right] \right. \nonumber\\
 \\ \nonumber
 & = \frac{1}{2\pi i} \int_{\mathcal{L}} \theta(s) z^s ds 
\end{align}
where 
\begin{equation} \label{hatItheta}
  \theta(s) = \frac{\prod_{j=1}^m \Gamma \left(b_j-f_js\right) \prod_{j=1}^n\Gamma \left(1-a_j+e_js\right)}{\prod_{j=m+1}^q \Gamma^{B_j} \left(1-b_j+f_js\right) \prod_{j=n+1}^p\Gamma^{A_j} \left(a_j-e_js\right)}
\end{equation}
and the function defined by \eqref{hatItheta} is convergent if 
\begin{equation} 
 \Delta_1>0, \quad |arg (z)|<\frac{1}{2}\Delta_1\pi
\end{equation}
 where
\begin{equation} \label{hatIdelta}
  \Delta_1 = \sum_{j=1}^m f_j - \sum_{j=m+1}^q B_jf_j + \sum_{j=1}^n e_j - \sum_{j=n+1}^p A_je_j.
\end{equation}
Also, from Rathie \cite{rathie1997}, it can be proven that \\
%\begin{equation} 
$\hat{I}(z)\sim z^c$ for small $z$, where $\; c= \displaystyle \min_{1\leq j \leq m}\left(Re\left[\frac{b_j}{f_j}\right]\right).$\\
and \\
$\hat{I}(z)\sim z^d$ for large $z$, where $\; d=\displaystyle \max_{1\leq j \leq n}\left(Re\left[\frac{a_j-1}{e_j}\right]\right).$
 
By the definition of $\hat{I}$-function, the following properties are immediate. 
\begin{align} \label{prop1}
\hat{I}_{\;p,\; q}^{\;m,\; n} & \left[ z^\sigma \left |\begin{array}{c} _1(a_j, e_j, 1)_n,\; _{n+1}(a_j, e_j,A_j)_p\\ _1(b_j, f_j, 1)_m,\; _{m+1}(b_j, f_j, B_j)_q\end{array} \right] \right. \nonumber \\
& = \frac{1}{\sigma} \hat{I}_{\;p,\; q}^{\;m,\; n} \left[ z \left |\begin{array}{c} _1(a_j, \frac{e_j}{\sigma}, 1)_n,\; _{n+1}(a_j, \frac{e_j}{\sigma}, A_j)_p\\ _1(b_j, \frac{f_j}{\sigma}, 1)_m,\; _{m+1}(b_j, \frac{f_j}{\sigma}, B_j)_q\end{array} \right] \right. , ~~~ \sigma>0
\end{align}
\begin{align} \label{prop2}
z^{\sigma} \; \hat{I}_{\;p,\; q}^{\;m,\; n} & \left[ z \left |\begin{array}{c} _1(a_j, e_j, 1)_n,\; _{n+1}(a_j, e_j,A_j)_p\\ _1(b_j, f_j, 1)_m,\; _{m+1}(b_j, f_j, B_j)_q\end{array} \right] \right. \nonumber \\
& =  \hat{I}_{\;p,\; q}^{\;m,\; n} \left[ z \left |\begin{array}{c} _1(a_j+\sigma e_j, e_j, 1)_n,\; _{n+1}(a_j+ \sigma e_j, e_j, A_j)_p\\ _1(b_j+ \sigma f_j, f_j, 1)_m,\; _{m+1}(b_j+ \sigma f_j, f_j, B_j)_q\end{array} \right] \right. 
\end{align}

%==========================
In addition to this, the Mellin transform of a continuous positive random variable $X$ with $\operatorname{pdf} f_X(x)$ is defined by
\begin{align}\label{mt}
\mathcal{M}_{f_X}(s)=E\left[X^{s-1}\right]=\int_0^{\infty} x^{s-1} f_X(x) d x
\end{align}
There exists an inversion integral
\begin{align} \label{imt}
f_X(x)=\frac{1}{2 \pi i} \int _{\mathcal{L}} x^{-s} \mathcal{M}_{f_X}(s) ds.
\end{align}
where $\mathcal{L}$ is a suitable contour.
%==============================================
\section{Mellin Transform of  $\hat{I}$-function}
In this section, we shall establish the Mellin transform of $\hat{I}$-function.\\
By the definition of $\hat{I}$-function \eqref{hatI}, we have
\begin{align} 
 \hat{I}(z\; x^\sigma) & = \hat{I}_{\;p,\; q}^{\;m,\; n} \left[ z\;x^\sigma \left |\begin{array}{c} _1(a_j, e_j, 1)_n,\; _{n+1}(a_j, e_j,A_j)_p\\ _1(b_j, f_j, 1)_m,\; _{m+1}(b_j, f_j, B_j)_q\end{array} \right] \right. \nonumber\\
 & = \frac{1}{2\pi i} \int_{\mathcal{L}} \theta(s) (z\;x^\sigma)^{-s} ds 
 \end{align}
Replacing $\sigma s$ by $t$ and $ds$ by $\frac{dt}{\sigma}$, we get
\begin{align} 
\hat{I}(z\; x^\sigma) =  \frac{1}{2\pi i} \int_{\mathcal{L}} x^{-t} \; \theta\left (\frac{t}{\sigma}\right ) \;  z^{\frac{-t}{\sigma}} \;  \frac{dt}{\sigma} 
 \end{align}
By the definition of Inverse Mellin Transform \eqref{imt}, we have
\begin{align}\label{hatImt}
\mathcal{M} \{\hat{I}(z\; x^\sigma)\} = \frac {\theta(\frac{s}{\sigma})}{\sigma \; z^\frac{s}{\sigma}}
 \end{align}
%=====
In other way, the result \eqref{hatImt} can be written in the following form:
\begin{align} 
 \int_0^{\infty} x^{s-1} \;&  \hat{I}_{\;p,\; q}^{\;m,\; n} \left[ z\;x^\sigma \left |\begin{array}{c} _1(a_j, e_j, 1)_n,\; _{n+1}(a_j, e_j,A_j)_p\\ _1(b_j, f_j, 1)_m,\; _{m+1}(b_j, f_j, B_j)_q\end{array} \right] \right. dx \nonumber\\
 & = \frac{1}{\sigma \; z^{\frac{s}{\sigma}}}\;  \frac{\prod_{j=1}^m \Gamma \left(b_j-f_j \frac{s}{\sigma}\right) \prod_{j=1}^n\Gamma \left(1-a_j+e_j\frac{s}{\sigma}\right)}{\prod_{j=m+1}^q \Gamma^{B_j} \left(1-b_j+f_j\frac{s}{\sigma}\right) \prod_{j=n+1}^p\Gamma^{A_j} \left(a_j-e_j\frac{s}{\sigma}\right)}  
 \end{align}
 where 
 \begin{align}
 Re\left [s+ \sigma \min_{1\leq j \leq m}\left(\frac{b_j}{f_j}\right)\right]> 0, \nonumber \\
 Re\left [s+ \sigma \min_{1\leq j \leq n}\left(\frac{a_j-1}{e_j}\right)\right]> 0, \nonumber \\
 \Delta_1>0, ~~~~\; |arg z| < \frac{\Delta_1 \pi}{2}, \nonumber
 \end{align}
  where $\Delta_1$ is the same as defined by \eqref{hatIdelta}. % by replacing $\Delta_1$ by $\theta$.
%====================================================
\section{Mellin Transform of the product of two $\hat{I}$-functions}
In this section, we shall establish the Mellin transform of the product of two $\hat{I}$-functions.\\
\\
For this, let 
\begin{align}
 \hat{I}(z_1x^\sigma) & = \hat{I}_{\;p_1,\; q_1}^{\;m_1,\; n_1} \left[ z_1x^\sigma \left |\begin{array}{c} _1(a_j, e_j, 1)_{n_1},\; _{n_1+1}(a_j, e_j,A_j)_{p_1}\\ _1(b_j, f_j, 1)_{m_1},\; _{m_1+1}(b_j, f_j, B_j)_{q_1}\end{array} \right] \right. \nonumber\\
 & = \frac{1}{2\pi i} \int_{\mathcal{L}} \theta_1(s_1) z_1^{-s_1} ds_1 
\end{align}
and 
\begin{align}
 \hat{I}(z_2x^\mu) & = \hat{I}_{\;p_2,\; q_2}^{\;m_2,\; n_2} \left[ z_2x^\mu \left |\begin{array}{c} _1(a_j', e_j', 1)_{n_2},\; _{n_2+1}(a_j', e_j',A_j')_{p_2}\\ _1(b_j', f_j', 1)_{m_2},\; _{m_1+1}(b_j', f_j', B_j')_{q_2}\end{array} \right] \right. \nonumber\\
 & = \frac{1}{2\pi i} \int_{\mathcal{L}} \theta_2(s_2) z_2^{-s_2} ds_2 
\end{align}
where 
\begin{equation} \label{theta1s1}
\theta_1(s_1) = \frac{\prod_{j=1}^{n_1} \Gamma \left(1-a_j-e_j s_1\right) \prod_{j=1}^{m_1} \Gamma\left(b_j+f_j s_1\right)}{\prod_{j=n_1+1}^{p_1} \Gamma^{A_j} \left(a_j+e_j s_1\right) \prod_{j=m_1+1}^{q_1}\Gamma^{B_j} \left(1-b_j-f_j s_1\right)}
\end{equation}
and 
\begin{align} \label{theta2s2}
\theta_2(s_2)= \frac{\prod_{j=1}^{n_2} \Gamma \left(1-a_j'-e_j' s_2\right) \prod_{j=1}^{m_2} \Gamma \left(b_j'+f_j' s_2\right)}{\prod_{j=n_2+1}^{p_2} \Gamma^{A_j'} \left(a_j'+e_j' s_2\right) \prod_{j=m_2+1}^{q_2}\Gamma^{B_j'} \left(1-b_j'-f_j 's_2\right)}
\end{align}
Then by the definition of Mellin transform,
\begin{align}
M_{\hat{I}[z_1x^\sigma] \; \hat{I}[z_2x^\mu]} (s) & = \int_{0}^{\infty}  x^{s-1} \hat{I}[z_1x^\sigma] \; \hat{I}[z_2x^\mu] \; dx \nonumber \\
& = \int_{0}^{\infty}\left\{  {x^{s-1}}{\hat{I}_{\:p_1,\;q_1}^{\:m_1,\;n_1}} \left[\begin{array} {c} z_1 \;x^{\sigma }\end{array}  \left| \begin{array}{c} _1(a_j, e_j, 1)_{n_1}, \;  _{n_1+1}(a_j, e_j, A_j)_{p_1}\\ _1(b_j, f_j, 1)_{m_1}, \; _{m_1+1}(b_j, f_j, B_j)_{q_1} \end{array} \right. \right] \right. \nonumber \\
& \qquad \left. \times {\hat{I}_{\:p_2,\;q_2}^{\:m_2,\;n_2}} \left[\begin{array} {c} z_2 \;x^{\mu} \end{array}  \left| \begin{array}{c}  _1(a_j', e_j', 1)_{n_2}, \; _{n_2+1}(a_j', e_j', A_j')_{p_2}\\ _1(b_j', f_j', 1)_{m_2}, \; _{m_2+1}(b_j', f_j', B_j')_{q_2} \end{array} \right. \right] \right\} dx \nonumber 
\end{align}
Expressing one of the $\hat{I}$-functions by its definition, we have
\begin{align}
 M_{\hat{I}[z_1x^\sigma] \; \hat{I}[z_2x^\mu]} (s)  = \int_{0}^{\infty}  x^{s-1} \hat{I}[z_1x^\sigma] \;  \frac{1}{2\pi i} \int_{\mathcal{L}} \theta_2(s_2)\; (z_2x^\mu)^{-s_2}\ ds_2\;  dx \nonumber 
\end{align}
%where $\theta_2(s_2)$ is given by
By interchanging the order of integration, we have
\begin{align}
M_{\hat{I}[z_1x^\sigma] \; \hat{I}[z_2x^\mu]} (s) &= \int_{\mathcal{L}} \theta_2(s_2)\; z_2^{-s_2}  \int_{0}^{\infty}  x^{s-\mu s_2-1} \hat{I}[z_1x^\sigma] \; dx \; ds_2 \nonumber 
\end{align}
Using \eqref{mt},
\begin{align}
M_{\hat{I}[z_1x^\sigma] \; \hat{I}[z_2x^\mu]} (s)= \int_{\mathcal{L}} \theta_2(s_2)\; z_2^{-s_2} \; M_{\hat{I}[z_1x^\sigma] } (s-\mu s_2) \; ds_2 \nonumber 
\end{align}
Using \eqref{hatImt},
\begin{align}
&M_{\hat{I}[z_1x^\sigma] \; \hat{I}[z_2x^\mu]} (s) = \int_{\mathcal{L}} \theta_2(s_2)\; z_2^{-s_2} \;  \frac{1}{\sigma z_1^{\frac{s}{\sigma}- \frac{\mu s_2}{\sigma}}} \; \theta_1\left(\frac{s-\mu s_2}{\sigma}\right) \; ds_2\nonumber
\end{align}
Using \eqref{hatI},we have finally
\begin{align} \label{mtprod}
& M_{\hat{I}[z_1x^\sigma] \; \hat{I}[z_2x^\mu]} (s)  \nonumber \\
&= \frac{1}{\sigma{ z_1}^{\frac{s}{\sigma}}} \; {I_{\:q_1+p_2, \;p_1+q_2}^{\:n_1+m_2,\;m_1+n_2}} \left[\begin{array} {c} \frac{z_2} { z_1^\frac {-\mu}{\sigma}}  \end{array}  \left| \begin{array}{c} _1(a_j', e_j', 1)_{n_2}, ~~{}_1(1-b_j-\frac{s}{\sigma} f_j, \frac{\mu}{\sigma} f_j, 1)_{m_1}, \\ _1(b_j', f_j', 1)_{m_2}, \; _1(1-a_j-\frac{s}{\sigma} e_j, \frac{\mu}{\sigma} e_j, 1)_{n_1} \end{array} \right. \right.  \nonumber \\
& \qquad \qquad \qquad  \left. \left. \begin{array} {c} _{m_1+ 1}(1-b_j-\frac{s}{\sigma} f_j, \frac{\mu}{\sigma} f_j, B_j)_{q_1}, \; _{n_2+1}(a_j', e_j', A_j')_{p_2}\\  _{n_1+1}(1-a_j-\frac{s}{\sigma} e_j, \frac{\mu}{\sigma} e_j, A_j)_{p_1},  \;_{{m_2}+1}(b_j', f_j', B_j')_{q_2}  \end{array} \right. \right] 
\end{align}
In either way, the result \eqref{mtprod} can be written in the following form:
\begin{align} 
 \int_0^{\infty} & x^{s-1} \;  \hat{I}_{\;p_1,\; q_1}^{\;m_1,\; n_1} \left[ z_1x^\sigma \left |\begin{array}{c} _1(a_j, e_j, 1)_{n_1},\; _{n_1+1}(a_j, e_j,A_j)_{p_1}\\ _1(b_j, f_j, 1)_{m_1},\; _{m_1+1}(b_j, f_j, B_j)_{q_1}\end{array} \right] \right.  \nonumber\\
  & \times  \hat{I}_{\;p_2,\; q_2}^{\;m_2,\; n_2} \left[ z_2x^\mu \left |\begin{array}{c} _1(a_j', e_j', 1)_{n_2},\; _{n_2+1}(a_j', e_j',A_j')_{p_2}\\ _1(b_j', f_j', 1)_{m_2},\; _{m_1+1}(b_j', f_j', B_j')_{q_2}\end{array} \right] \right. \; dx\nonumber \\
 & = \frac{1}{\sigma{ z_1}^{\frac{s}{\sigma}}} \; {I_{\:q_1+p_2, \;p_1+q_2}^{\:n_1+m_2,\;m_1+n_2}} \left[\begin{array} {c} z_2  z_1^\frac {\mu}{\sigma}  \end{array}  \left| \begin{array}{c} _1(a_j', e_j', 1)_{n_2}, \; _1(1-b_j-\frac{s}{\sigma} f_j, \frac{\mu}{\sigma} f_j, 1)_{m_1}, \\ _1(b_j', f_j', 1)_{m_2}, \; _1(1-a_j-\frac{s}{\sigma} e_j, \frac{\mu}{\sigma} e_j, 1)_{n_1} \end{array} \right. \right.  \nonumber \\
& \qquad \qquad \qquad\left. \left. \begin{array} {c} _{m_1+ 1}(1-b_j-\frac{s}{\sigma} f_j, \frac{\mu}{\sigma} f_j, B_j)_{q_1}, \; _{n_2+1}(a_j', e_j', A_j')_{p_2}\\  _{n_1+1}(1-a_j-\frac{s}{\sigma} e_j, \frac{\mu}{\sigma} e_j, A_j)_{p_1},  \;_{{m_2}+1}(b_j', f_j', B_j')_{q_2}  \end{array} \right. \right]  
 \end{align}
 where 
 \begin{align}
 Re\left [s+ \sigma \min_{1\leq j \leq m_1}\left(\frac{b_j}{f_j}\right)+\mu \min_{1\leq j \leq m_2}\left(\frac{b_j'}{f_j'}\right)\right]> 0, \nonumber \\
  Re\left [s+ \sigma \min_{1\leq j \leq n_1}\left(\frac{a_j-1}{e_j}\right)+ \mu \min_{1\leq j \leq n_2}\left(\frac{a_j'-1}{e_j'}\right)\right]> 0, \nonumber \\
  \Delta_{11}>0, \; |arg z_1| < \frac{ \Delta_{11} \pi}{2},\; \Delta_{12}>0, \; |arg z_2| < \frac{ \Delta_{12} \pi}{2}, \nonumber  
  \end {align}
where $ \Delta_{11}$ and $ \Delta_{12}$ are defined by 
\begin{equation} 
   \Delta_{11} = \sum_{j=1}^{m_1} f_j - \sum_{j=m_1+1}^{q_1} B_jf_j + \sum_{j=1}^{n_1} e_j - \sum_{j=n_1+1}^{p_1} A_je_j\nonumber
\end{equation}
and 
\begin{equation} 
   \Delta_{12} = \sum_{j=1}^{m_2} f_j' - \sum_{j=m_2+1}^{q_2} B_j'f_j' + \sum_{j=1}^{n_2} e_j' - \sum_{j=n_2+1}^{p_2} A_j'e_j' \nonumber
\end{equation}
%===================================================
%==========================================
\section{Distributions of product and quotient of two independent $\hat{I}$-function variates}

In this section, we shall obtain the distribution of product and quotient of two independent $\hat{I}$-function variates asserted in the following theorem. 
%=====================================================
\begin{theorem} If $X_1$ and $X_2$ be the independent $\hat{I}$-function variates with p.d.f. given respectively by 
\begin{align} \label{f1x1}
f_1(X_1) = \sigma \; z_1^{\frac{s_1}{\sigma}} \; \phi_1(s_1) \; x^{s_1-1}\hat{I}_{\;p_1,\; q_1}^{\;m_1,\; n_1} \left[ z_1\; x^\sigma \left |\begin{array}{c} _1(a_j, e_j, 1)_{n_1},\; _{n_1+1}(a_j, e_j,A_j)_{p_1}\\ _1(b_j, f_j, 1)_{m_1},\; _{m_1+1}(b_j, f_j, B_j)_{q_1}\end{array} \right] \right.
\end{align}
and 
\begin{align} \label{f2x2}
f_2(X_2) = \mu \; z_2^{\frac{s_2}{\mu}} \; \phi_2(s_2) \; x^{s_2-1}\hat{I}_{\;p_2,\; q_2}^{\;m_2,\; n_2} \left[ z_2\; x^\mu \left |\begin{array}{c} _1(a_j', e_j', 1)_{n_2},\; _{n_2+1}(a_j', e_j',A_j')_{p_2}\\ _1(b_j', f_j', 1)_{m_2},\; _{m_2+1}(b_j', f_j', B_j')_{q_2}\end{array} \right] \right.
\end{align}
where $0<\sigma <\infty$, $0<\mu <\infty$, $z_1$, $z_2$, $s_1$, $s_2 >0$, with\\
\begin{align}\label{phi1s1} 
\phi_1(s_1) = \frac{\prod_{j=n_1+1}^{p_1} \Gamma^{A_j} \left(a_j+\frac{s_1}{\sigma}e_j \right) \prod_{j=m_1+1}^{q_1}\Gamma^{B_j} \left(1-b_j-\frac{s_1}{\sigma}f_j \right)}{\prod_{j=1}^{n_1} \Gamma \left(1-a_j-\frac{s_1}{\sigma}e_j \right) \prod_{j=1}^{m_1} \Gamma\left(b_j+\frac{s_1}{\sigma}f_j \right)}
\end{align} 
and
\begin{align} \label{phi2s2}
\phi_2(s_2) = \frac{\prod_{j=n_2+1}^{p_2} \Gamma^{A_j'} \left(a_j'+\frac{s_2}{\mu}e_j' \right) \prod_{j=m_2+1}^{q_2}\Gamma^{B_j'} \left(1-b_j'-\frac{s_2}{\mu}f_j' \right)}{\prod_{j=1}^{n_2} \Gamma \left(1-a_j'-\frac{s_2}{\mu}e_j' \right) \prod_{j=1}^{m_2} \Gamma\left(b_j'+\frac{s_2}{\mu}f_j' \right)}
\end{align}
then the p.d.f. of the variate $Y= X_1 \; X_2$ is given by 
\begin{align} \label{productrv1}
g(Y) = & \frac{\sigma \;\mu} {y} \; \phi_1(s_1)\; \phi_2(s_2) \nonumber \\
& \times \hat{I}_{\;p_1+p_2,\; q_1+q_2}^{\;m_1+m_2,\; n_1+n_2} \left[ z_1^\mu \; z_2^\sigma \; y^{\mu \sigma}  \left |\begin{array}{c} _1(a_j +\frac{s_1}{\sigma} e_j, \mu e_j, 1)_{n_1},\; _1(a_j' +\frac{s_2}{\mu} e_j', \sigma e_j', 1)_{n_2},\\ _1(b_j+\frac{s_1}{\sigma}f_j, \mu f_j, 1)_{m_1}, \; _1(b_j'+\frac{s_2}{\mu}f_j', \sigma f_j', 1)_{m_2}, \end{array} \right. \right. \nonumber \\
& \left. \left. \begin{array}{c} _{n_1+1}(a_j+\frac{s_1}{\sigma} e_j, \mu e_j ,A_j)_{p_1}, \;_{n_2+1}(a_j'+\frac{s_2}{\mu}e_j'1, \sigma e_j' ,A_j')_{p_2}\\  _{m_1+1}(b_j+\frac{s_1}{\sigma}f_j, \mu f_j, B_j)_{q_1}, \; _{m_2+1}(b_j'+\frac{s_2}{\mu}f_j', \sigma f_j', B_j')_{q_2} \end{array} \right. \right]
\end{align} 
\end{theorem}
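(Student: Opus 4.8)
The plan is to use the fact that the Mellin transform turns multiplication of independent variates into multiplication of transforms. Since $X_1$ and $X_2$ are independent, for $Y=X_1X_2$ we have
\begin{equation*}
\mathcal{M}_{g}(s)=E\left[Y^{s-1}\right]=E\left[X_1^{s-1}\right]E\left[X_2^{s-1}\right]=\mathcal{M}_{f_1}(s)\,\mathcal{M}_{f_2}(s),
\end{equation*}
so the whole problem reduces to evaluating the two single-variate Mellin transforms and inverting their product via \eqref{imt}.

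First I would compute $\mathcal{M}_{f_1}(s)=\int_0^\infty x^{s-1}f_1(x)\,dx$. Inserting \eqref{f1x1} and pulling the constants out leaves $\sigma\,z_1^{s_1/\sigma}\phi_1(s_1)\int_0^\infty x^{(s+s_1-1)-1}\hat{I}[z_1x^\sigma]\,dx$, and the inner integral is precisely the Mellin transform of the $\hat{I}$-function established in Section~2 (equation \eqref{hatImt}), evaluated at $s+s_1-1$. The explicit factor $\sigma$ cancels the $\tfrac1\sigma$ produced by that formula, giving
\begin{equation*}
\mathcal{M}_{f_1}(s)=\phi_1(s_1)\,z_1^{(1-s)/\sigma}\,\theta_1\!\left(\tfrac{s+s_1-1}{\sigma}\right),
\end{equation*}
with $\theta_1$ as in \eqref{theta1s1}; an identical computation gives $\mathcal{M}_{f_2}(s)$ with $\sigma,z_1,s_1,\theta_1$ replaced by $\mu,z_2,s_2,\theta_2$. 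A useful check here is that $\phi_1(s_1)$ and $\phi_2(s_2)$ of \eqref{phi1s1}, \eqref{phi2s2} are exactly the reciprocals $1/\theta_1(s_1/\sigma)$ and $1/\theta_2(s_2/\mu)$, so $\mathcal{M}_{f_1}(1)=\mathcal{M}_{f_2}(1)=1$ and $f_1,f_2$ are genuine densities.

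Next I would form the product $\mathcal{M}_g(s)=\mathcal{M}_{f_1}(s)\mathcal{M}_{f_2}(s)$ and invert it through \eqref{imt}, writing $g(y)=\frac{1}{2\pi i}\int_{\mathcal{L}}y^{-s}\mathcal{M}_g(s)\,ds$. Setting $t=s-1$ isolates the prefactor $y^{-1}$, while each gamma block now carries its shift as $\theta_1\!\left(\tfrac{s_1}{\sigma}+\tfrac{t}{\sigma}\right)$, $\theta_2\!\left(\tfrac{s_2}{\mu}+\tfrac{t}{\mu}\right)$; reading the shift into the parameters converts $a_j\mapsto a_j+\tfrac{s_1}{\sigma}e_j$, $b_j\mapsto b_j+\tfrac{s_1}{\sigma}f_j$ (and the primed analogues), which are exactly the shifted entries in \eqref{productrv1}. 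The decisive step is then the single rescaling $t=\sigma\mu\,v$ (equivalently the scaling property \eqref{prop1}): it simultaneously homogenises the two mismatched scalings, sending $\tfrac{t}{\sigma}\mapsto\mu v$ and $\tfrac{t}{\mu}\mapsto\sigma v$ (hence $e_j\mapsto\mu e_j$, $f_j\mapsto\mu f_j$ and $e_j'\mapsto\sigma e_j'$, $f_j'\mapsto\sigma f_j'$), collapses the powers of $z_1,z_2,y$ into the single argument $z_1^{\mu}z_2^{\sigma}y^{\sigma\mu}$, and supplies the Jacobian $\sigma\mu$. Comparing the resulting Mellin--Barnes integral with the definition \eqref{hatI} identifies it as an $\hat{I}$-function of order $(p_1+p_2,q_1+q_2)$ with $(m_1+m_2,n_1+n_2)$, together with the factor $\tfrac{\sigma\mu}{y}\phi_1(s_1)\phi_2(s_2)$, which is exactly \eqref{productrv1}.

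I expect the main obstacle to be the bookkeeping in this last identification rather than any analytic difficulty: one must verify that the merged gamma ratio $\theta_1\theta_2$, after the shift and rescaling, assembles into a single valid $\theta$-function with the numerator $\Gamma$'s (the $(\cdot,\cdot,1)$ entries coming from the first $m_i$ and $n_i$ pairs) and the denominator $\Gamma^{A_j},\Gamma^{B_j}$'s distributed into the correct upper and lower rows of \eqref{productrv1}. Tracking the sign convention in the gamma arguments (the $z^{s}$ versus $z^{-s}$ form of the contour integral, which fixes whether $\phi_i$ is $\theta_i$ or $1/\theta_i$) and confirming that the admissible strip for $s$ is the common intersection of the two individual strips are the points needing the most care; once these are settled the remaining manipulations are routine substitution.
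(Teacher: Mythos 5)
Your proposal follows the same route as the paper: compute $E(X_i^{s-1})$ for each density using the Mellin transform of the $\hat{I}$-function, multiply by independence, invert via \eqref{imt}, and then absorb the shift and rescaling into the parameters — your explicit substitutions $t=s-1$ and $t=\sigma\mu v$ are precisely what the paper accomplishes by invoking properties \eqref{prop2} and \eqref{prop1}. The observation that $\phi_i$ is the reciprocal of $\theta_i$ evaluated at $s_i/\sigma$ (resp. $s_i/\mu$), so that the densities integrate to one, is a worthwhile check that the paper leaves implicit, but the argument itself is the same.
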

\begin{proof}
It is not difficult to see that 
\begin{align}\label{ex1}
E(X_1^{s-1}) &=  \int_0^ \infty x_1^{s-1} \; f_1(X_1) \;  dx_1 \nonumber \\
&= z_1^{\frac{1-s}{\sigma}}\; \phi_1(s_1) \; \theta_1 \left(\frac{s_1-1+s}{\sigma}\right)
\end{align}
and
\begin{align} \label{ex2}
E(X_2^{s-1}) & = \int_0^ \infty x_2^{s-1} \; f_2 (X_2) \;  dx_2 \nonumber \\
&= z_2^{\frac{1-s}{\mu}}\; \phi_2(s_2) \; \theta_2 \left(\frac{s_2-1+s}{\mu}\right)
\end{align}
Now, since $X_1, X_2$ are independent random variables, so
\begin{align}
E(Y^{s-1})= E[(X_1 \; X_2) ^{s-1}] = E(X_1^{s-1}) \; E(X_2^{s-1})
\end{align}
and 
\begin{align}\label{gofy}
g(Y)=  \frac{1}{2\pi i}  \int_{\mathcal{L}} E(Y^{s-1}) \;y^{-s} \;   ds 
\end{align}
Using \eqref{ex1} and \eqref{ex2}  in \eqref{gofy},  we have 
\begin{align}
g(Y)& = \frac{1}{2\pi i}  \int_{\mathcal{L}}z_1^{\frac{1-s}{\sigma}}\; \phi_1(s_1) \; \theta_1 \left(\frac{s_1-1+s}{\sigma}\right)\;   z_2^{\frac{1-s}{\mu}}\; \phi_2(s_2) \; \theta_2 \left(\frac{s_2-1+s}{\mu}\right)\;ds \nonumber \\
& =z_1^\frac{1}{\sigma} \; z_2^\frac{1}{\mu} \; \phi_1(s_1) \; \phi_2(s_2)\nonumber \\
& ~~~ ~~ \times \frac{1}{2 \pi i}\int_{\mathcal{L}} \theta_1 \left(\frac{s_1-1+s}{\sigma}\right)\; \theta_2 \left(\frac{s_2-1+s}{\mu}\right)\; \left(y\;  z_1^\frac{1}{\sigma}\; z_2^\frac{1}{\mu}\right)^{-s} d s 
\end{align}
By using the definition of $\hat{I}$-function \eqref{hatI}, we get
\begin{align}
g(Y)& =z_1^\frac{1}{\sigma} \; z_2^\frac{1}{\mu} \; \phi_1(s_1) \; \phi_2(s_2) \nonumber \\
& \times \hat{I}_{\;p_1+p_2,\; q_1+q_2}^{\;m_1+m_2,\; n_1+n_2} \left[ y\;  z_1^\frac{1}{\sigma}\; z_2^\frac{1}{\mu}  \left |\begin{array}{c} _1(a_j +\frac{s_1-1}{\sigma} e_j, \frac{e_j}{\sigma} , 1)_{n_1},\;_1(a_j' +\frac{s_2-1}{\mu} e_j', \frac{e_j'}{\mu}, 1)_{n_2},\\ _1(b_j+\frac{s_1-1}{\sigma}f_j, \frac{f_j}{\sigma}, 1)_{m_1},\; 1(b_j'+\frac{s_2-1}{\mu}f_j', \frac{f_j'}{\mu}, 1)_{m_2}, \end{array} \right. \right. \nonumber \\
& \qquad \qquad \qquad \left. \left. \begin{array}{c} _{n_1+1}(a_j+\frac{s_1-1}{\sigma} e_j, \frac{e_j}{\sigma} ,A_j)_{p_1}, \;_{n_2+1}(a_j'+\frac{s_2-1}{\mu}e_j', \frac{e_j'}{\mu} ,A_j')_{p_2}\\  \; _{m_1+1}(b_j+\frac{s_1-1}{\sigma}f_j, \frac{f_j}{\sigma}, B_j)_{q_1}, \; _{m_2+1}(b_j'+\frac{s_2-1}{\mu}f_j', \frac{ f_j'}{\mu}, B_j')_{q_2} \end{array} \right. \right]\nonumber
\end{align} 
Upon using \eqref{prop1} and \eqref{prop2}, we finally arrive at \eqref{productrv1}.\\
This completes the proof of Theorem 1. \end{proof}
%===============================
\begin{theorem} If $X_1$ and $X_2$ be two independent $\hat{I}$-function variates with p.d.f. given respectively by the equations \eqref{f1x1} and \eqref{f2x2}, where $0<\sigma <\infty, 0<\mu <\infty, z_1, z_2, s_1, s_2 >0$, and $\phi_1(s_1)$ and $\phi_2(s_2)$ are also the same given by equations \eqref{phi1s1} and \eqref{phi2s2} respectively, then the p.d.f. of the variate $Y= \frac{X_1}{X_2}$ is given by 
\begin{align} \label{quotientrv1}
g(Y) = & \frac{\sigma \;\mu} {y} \; \phi_1(s_1)\; \phi_2(s_2) \nonumber \\
& \times \hat{I}_{\;p_1+q_2,\; q_1+p_2}^{\;m_1+n_2,\; n_1+m_2} \left[ z_1^\mu \; z_2^{-\sigma} \; y^{\mu \sigma}  \left |\begin{array}{c} _1(a_j +\frac{s_1}{\sigma} e_j, \mu e_j, 1)_{n_1},\\ _1(b_j+\frac{s_1}{\sigma}f_j, \mu f_j, 1)_{m_1}, \end{array} \right. \right. \nonumber \\
& \qquad \qquad \qquad \qquad \left. \left. \begin{array}{c} _1(1-b_j' -\frac{s_2}{\mu} f_j', \sigma f_j', 1)_{m_2},\;_{n_1+1}(a_j+\frac{s_1}{\sigma} e_j, \mu e_j ,A_j)_{p_1}, \\ _1(1-a_j'-\frac{s_2}{\mu}e_j', \sigma e_j', 1)_{n_2}, \; _{m_1+1}(b_j+\frac{s_1}{\sigma}f_j, \mu f_j, B_j)_{q_1},  \end{array} \right. \right. \nonumber \\
& \qquad \qquad \qquad \qquad \qquad \qquad \left. \left. \begin{array}{c} _{m_2+1}(1-b_j'-\frac{s_2}{\mu}f_j', \sigma f_j'1 ,B_j')_{q_2}\\ _{n_2+1}(1-a_j'-\frac{s_2}{\mu}e_j', \sigma e_j', A_j')_{p_2} \end{array} \right. \right]
\end{align} 
\end{theorem}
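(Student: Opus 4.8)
The plan is to follow the same Mellin-transform route used in the proof of Theorem 1, the only new feature being that the quotient $Y = X_1/X_2$ forces a \emph{reciprocal} moment of $X_2$. First I would invoke the independence of $X_1$ and $X_2$ to write $E(Y^{s-1}) = E\big[(X_1/X_2)^{s-1}\big] = E(X_1^{s-1})\,E(X_2^{1-s})$. The first factor is already recorded in \eqref{ex1}, so the genuinely new ingredient is the negative-order moment $E(X_2^{1-s})$.

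To evaluate it, I would either substitute $s \mapsto 2-s$ in \eqref{ex2} or integrate $\int_0^\infty x_2^{-s} f_2(X_2)\,dx_2$ directly using the Mellin transform \eqref{hatImt} of the $\hat{I}$-function; both give $E(X_2^{1-s}) = z_2^{(s-1)/\mu}\,\phi_2(s_2)\,\theta_2\!\big(\tfrac{s_2+1-s}{\mu}\big)$. The essential point to flag is that, in contrast with the product case, the integration variable $s$ now enters the argument of $\theta_2$ with a \emph{negative} sign.

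Next, inserting this together with \eqref{ex1} into the inversion formula \eqref{gofy}, $g(Y) = \tfrac{1}{2\pi i}\int_{\mathcal L} E(Y^{s-1})\,y^{-s}\,ds$, and pulling out the constants $z_1^{1/\sigma} z_2^{-1/\mu}\phi_1(s_1)\phi_2(s_2)$, I would be left with $\tfrac{1}{2\pi i}\int_{\mathcal L}\theta_1\!\big(\tfrac{s_1-1+s}{\sigma}\big)\,\theta_2\!\big(\tfrac{s_2+1-s}{\mu}\big)\,\big(y\,z_1^{1/\sigma} z_2^{-1/\mu}\big)^{-s}\,ds$. The $\theta_1$-block contributes exactly as in Theorem 1. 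The $\theta_2$-block, however, carries $-s$, so to read the integrand off as a single $\hat{I}$-function I would rewrite each gamma factor of $\theta_2\!\big(\tfrac{s_2+1-s}{\mu}\big)$ in the standard $\theta$-form: reversing the sign of the $s$-coefficient turns the $n_2$ numerator factors into $m$-type factors and the $m_2$ numerator factors into $n$-type factors, and likewise exchanges the two denominator blocks. This is precisely the reciprocal-argument reflection of the $\hat{I}$-function, and it is what accounts simultaneously for (i) the swapped orders $\hat{I}^{\,m_1+n_2,\;n_1+m_2}_{\,p_1+q_2,\;q_1+p_2}$, (ii) the reflected parameters $a_j'\mapsto 1-a_j'$, $b_j'\mapsto 1-b_j'$ appearing in \eqref{quotientrv1}, and (iii) the replacement of $z_2$ by $z_2^{-1}$, hence the factor $z_2^{-\sigma}$ in the final argument.

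Finally I would collect the combined $\hat{I}$-function and apply the elementary scaling and shifting identities \eqref{prop1} and \eqref{prop2} to absorb the step-sizes $e_j/\sigma,\,f_j/\sigma,\,e_j'/\mu,\,f_j'/\mu$ and the $s_1,s_2,\pm 1$ shifts into the normalized form, producing the argument $z_1^{\mu} z_2^{-\sigma} y^{\mu\sigma}$ and the exact parameter lists of \eqref{quotientrv1}. I expect the main obstacle to be the careful bookkeeping of this sign reversal for the $X_2$-block, namely tracking how each of the four gamma-blocks of $\theta_2$ migrates and how the parameters reflect, together with checking that the poles of $\theta_1$ and of the reflected $\theta_2$ still lie on the correct sides of $\mathcal L$, so that the interchange of integration order and the Mellin inversion remain justified under the stated convergence conditions on $\Delta_{11},\Delta_{12}$ and on the arguments $z_1,z_2$.
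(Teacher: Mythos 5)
Your proposal follows essentially the same route as the paper's own proof: independence gives $E(Y^{s-1})=E(X_1^{s-1})E(X_2^{1-s})$, the negative-order moment of $X_2$ is read off from \eqref{ex2}, the product is inserted into the Mellin inversion \eqref{gofy}, the resulting contour integral is recognized as an $\hat{I}$-function with the $m$- and $n$-blocks of the second variate interchanged, and \eqref{prop1}--\eqref{prop2} yield \eqref{quotientrv1}. Your additional remarks on tracking the sign reversal in the $\theta_2$-block and verifying the pole locations relative to $\mathcal{L}$ are a sensible elaboration of steps the paper leaves implicit, but they do not change the argument.
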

\begin{proof}
We have
\begin{align}
E(Y^{s-1})= E\left[ \left (\frac{X_1} {X_2} \right )^{s-1} \right] & =  \frac {E(X_1^{s-1})} {E(X_2^{s-1})} =E(X_1^{s-1}X_2^{1-s})\nonumber
\end{align}
Since $X_1$ and $X_2$ are independent random variables, 
\begin{align}  
 E(Y^{s-1})= E(X_1^{s-1}) E(X_2^{1-s})\nonumber
\end{align}
Using \eqref{ex1} and \eqref{ex2} in \eqref{gofy}, we have 
\begin{align}
g(Y)& = \frac{1}{2\pi i}  \int_{\mathcal{L}}z_1^{\frac{1-s}{\sigma}}\; \phi_1(s_1) \; \theta_1 \left(\frac{s_1-1+s}{\sigma}\right)\;   z_2^{\frac{s-1}{\mu}}\; \phi_2(s_2) \; \theta_2 \left(\frac{s_2+1-s}{\mu}\right)\;ds \nonumber \\
& =z_1^\frac{1}{\sigma} \; z_2^{-\frac{1}{\mu}} \; \phi_1(s_1) \; \phi_2(s_2)\nonumber \\
& ~~~ ~~ \times \frac{1}{2 \pi i}\int_{\mathcal{L}} \theta_1 \left(\frac{s_1-1+s}{\sigma}\right)\; \theta_2 \left(\frac{s_2+1-s}{\mu}\right)\; \left(y\;  z_1^\frac{1}{\sigma}\; z_2^{-\frac{1}{\mu}}\right)^{-s} d s 
\end{align}
By using the definition of $\hat{I}$-function \eqref{hatI}, we get
\begin{align}
g(Y)& =z_1^\frac{1}{\sigma} \; z_2^{-\frac{1}{\mu}} \; \phi_1(s_1) \; \phi_2(s_2) \nonumber \\
& \times \hat{I}_{\;p_1+q_2,\; q_1+p_2}^{\;m_1+n_2,\; n_1+m_2} \left[ y\;  z_1^\frac{1}{\sigma}\; z_2^{-\frac{1}{\mu}}  \left |\begin{array}{c} _1(a_j +\frac{s_1-1}{\sigma} e_j, \frac{e_j}{\sigma} , 1)_{n_1},\\ _1(b_j+\frac{s_1-1}{\sigma}f_j, \frac{f_j}{\sigma}, 1)_{m_1}, \end{array} \right. \right. \nonumber \\
& \qquad \qquad\left. \left. \begin{array}{c} _1(1-b_j' -\frac{s_2+1}{\mu} f_j', \frac{f_j'}{\mu}, 1)_{m_2},\;_{n_1+1}(a_j+\frac{s_1-1}{\sigma} e_j, \frac{e_j} {\sigma} ,A_j)_{p_1}, \\ _1(1-a_j'-\frac{s_2+1}{\mu}e_j', \frac{e_j'}{\mu}, 1)_{n_2}, \; _{m_1+1}(b_j+\frac{s_1-1}{\sigma}f_j, \frac{f_j}{\sigma}, B_j)_{q_1},   \end{array} \right. \right. \nonumber \\
& \qquad\qquad \qquad \qquad\left. \left. \begin{array}{c} _{m_2+1}(1-b_j'-\frac{s_2+1}{\mu}f_j', \frac {f_j'}{\mu} ,B_j')_{q_2}\\ _{n_2+1}(1-a_j'-\frac{s_2+1}{\mu}e_j', \frac{ e_j'}{\mu}, A_j')_{p_2} \end{array} \right. \right]
\end{align} 
Upon using \eqref{prop1} and \eqref{prop2}, we finally arrive at \eqref{quotientrv1}. \\
This completes the proof of Theorem 2.
\end{proof}
 %==================================
\section{Corollaries}
\subsection*{Corollary 1}
If $A_j=1$ for $j=n_1+1, \cdots,p_1$, $B_j=1$ for $j=m_1+1, \cdots,q_1$, $A_j'=1$ for $j=n_2+1, \cdots,p_2$, and $B_j'=1$ for $j=m_2+1, \cdots,q_2$, then the $\hat{I}$-function in Thereom 1 reduces to H-function, and we get the corresponding result for the H-function as follows. \\
 If $X_1$ and $X_2$ are independent $H$-function variates with p.d.f. given respectively by 
\begin{align} 
f_1(X_1) = \sigma \; z_1^{\frac{s_1}{\sigma}} \; \phi_1(s_1) \; x^{s_1-1}H_{\;p_1,\; q_1}^{\;m_1,\; n_1} \left[ z_1\; x^\sigma \left |\begin{array}{c} _1(a_j, e_j)_{p_1}\\ _1(b_j, f_j)_{q_1}\end{array} \right] \right., x>0
\end{align}
and 
\begin{align} 
f_2(X_2) = \mu \; z_2^{\frac{s_2}{\mu}} \; \phi_2(s_2) \; x^{s_2-1}H_{\;p_2,\; q_2}^{\;m_2,\; n_2} \left[ z_2\; x^\mu \left |\begin{array}{c} _1(a_j', e_j')_{p_2}\\ _1(b_j', f_j')_{q_2}\end{array} \right] \right., x>0
\end{align}
where $0<\sigma <\infty, 0<\mu <\infty, z_1, z_2, s_1, s_2 >0$, \\
also
\begin{align} 
\phi_1(s_1) = \frac{\prod_{j=n_1+1}^{p_1} \Gamma \left(a_j+\frac{s_1}{\sigma}e_j \right) \prod_{j=m_1+1}^{q_1}\Gamma \left(1-b_j-\frac{s_1}{\sigma}f_j \right)}{\prod_{j=1}^{n_1} \Gamma \left(1-a_j-\frac{s_1}{\sigma}e_j \right) \prod_{j=1}^{m_1} \Gamma\left(b_j+\frac{s_1}{\sigma}f_j \right)}
\end{align} 
and
\begin{align} 
\phi_2(s_2) = \frac{\prod_{j=n_2+1}^{p_2} \Gamma \left(a_j'+\frac{s_2}{\mu}e_j' \right) \prod_{j=m_2+1}^{q_2}\Gamma \left(1-b_j'-\frac{s_2}{\mu}f_j' \right)}{\prod_{j=1}^{n_2} \Gamma \left(1-a_j'-\frac{s_2}{\mu}e_j' \right) \prod_{j=1}^{m_2} \Gamma\left(b_j'+\frac{s_2}{\mu}f_j' \right)}
\end{align}
then the p.d.f. of the variate $Y= X_1 \; X_2$ is given by 
\begin{align} \label{productrv1H}
g(Y) &=  \frac{\sigma \;\mu} {y} \; \phi_1(s_1)\; \phi_2(s_2) \nonumber \\
& \times H_{\;p_1+p_2,\; q_1+q_2}^{\;m_1+m_2,\; n_1+n_2} \left[ z_1^\mu \; z_2^\sigma \; y^{\mu \sigma}  \left |\begin{array}{c} _1(a_j +\frac{s_1}{\sigma} e_j, \mu e_j)_{n_1},\\ _1(b_j+\frac{s_1}{\sigma}f_j, \mu f_j)_{m_1}, \end{array} \right. \right. \nonumber \\
& \left. \left. \begin{array}{c} _1(a_j' +\frac{s_2}{\mu} e_j', \sigma e_j')_{n_2},\;_{n_1+1}(a_j+\frac{s_1}{\sigma} e_j, \mu e_j)_{p_1}, \;_{n_2+1}(a_j'+\frac{s_2}{\mu}e_j', \sigma e_j')_{p_2}\\ _1(b_j'+\frac{s_2}{\mu}f_j', \sigma f_j')_{m_2}, \; _{m_1+1}(b_j+\frac{s_1}{\sigma}f_j, \mu f_j)_{q_1}, \; _{m_2+1}(b_j'+\frac{s_2}{\mu}f_j', \sigma f_j')_{q_2} \end{array} \right. \right]
\end{align} 
%====
 The p.d.f. of the variate $Y= \frac{X_1}{X_2}$ is given by 
\begin{align} \label{quotientrv1H}
h(Y) &=  \frac{\sigma \;\mu} {y} \; \phi_1(s_1)\; \phi_2(s_2) \nonumber \\
& \times H_{\;p_1+q_2,\; q_1+p_2}^{\;m_1+n_2,\; n_1+m_2} \left[ z_1^\mu \; z_2^{-\sigma} \; y^{\mu \sigma}  \left |\begin{array}{c} _1(a_j +\frac{s_1}{\sigma} e_j, \mu e_j)_{n_1},\\ _1(b_j+\frac{s_1}{\sigma}f_j, \mu f_j)_{m_1}, \end{array} \right. \right. \nonumber \\
& \left. \left. \begin{array}{c} _1(1-b_j' -\frac{s_2}{\mu} f_j', \sigma f_j')_{q_2},\;_{n_1+1}(a_j+\frac{s_1}{\sigma} e_j, \mu e_j)_{p_1}\\ _1(1-a_j'-\frac{s_2}{\mu}e_j', \sigma e_j')_{p_2}, \; _{m_1+1}(b_j+\frac{s_1}{\sigma}f_j, \mu f_j)_{q_1} \end{array} \right. \right]
\end{align} 
%=====================================
\subsection*{Corollary 2}
If we take $\sigma = \mu=r, z_1=z_2=z, m_1=m_2=m, n_1=n_2=n, p_1=p_2=p, q_1=q_2=q$ in Theorem 2, then we get the following result after little simplification.\\ 
If $X_1$ and $X_2$ are independent $\hat{I}$-function variates with p.d.f. given respectively by 
\begin{align} 
f_1(X_1) = r \; z^{\frac{s_1}{r}} \; \phi_1(s_1) \; x^{s_1-1}\hat{I}_{\;p,\; q}^{\;m,\; n} \left[ z\; x^r \left |\begin{array}{c} _1(a_j, e_j, 1)_n,\; _{n+1}(a_j, e_j,A_j)_p\\ _1(b_j, f_j, 1)_m,\; _{m+1}(b_j, f_j, B_j)_q\end{array} \right] \right.
\end{align}
and 
\begin{align} 
f_2(X_2) =  r \; z^{\frac{s_2}{r}} \; \phi_2(s_2) \; x^{s_2-1}\hat{I}_{\;p,\; q}^{\;m,\; n} \left[ z\; x^r \left |\begin{array}{c} _1(a_j, e_j, 1)_n,\; _{n+1}(a_j, e_j,A_j)_p\\ _1(b_j, f_j, 1)_m,\; _{m+1}(b_j, f_j, B_j)_q\end{array} \right] \right.
\end{align}
where $0<r <\infty, z, s_1, s_2 >0$, \\
 The p.d.f. of the variate $Y= \frac{X_1}{X_2}$ is given by 
\begin{align} \label{quotientrvcor2}
g(Y) = & r\; y^{-s_2-1} \; \phi(s_1)\; \phi(s_2) \nonumber \\
& \times \hat{I}_{\;p+q,\; q+p}^{\;m+n,\; n+m} \left[ y^r  \left |\begin{array}{c} _1(1-b_j,f_j, 1)_m,\; _{m+1}(1-b_j,f_j,B_j)_q, \\ _1(b_j+\frac{s_1+s_2}{r}f_j, f_j, 1)_m,\; _{m+1}(b_j+\frac{s_1+s_2}{r}f_j, f_j, B_j)_q,  \end{array} \right. \right. \nonumber \\
& \left. \left. \begin{array}{c} _1(a_j+\frac{s_1+s_2}{r} e_j, e_j, 1)_n,\;_{n+1}(a_j+\frac{s_1+s_2}{r} e_j, e_j ,A_j)_p \\_1(1-a_j, e_j,1)_n, \;  _{n+1}(1-a_j, e_j, A_j)_p \end{array} \right. \right]
\end{align} 
Further, in \eqref{quotientrvcor2}, if we take $A_j=1$ for $j=n+1, \cdots,p$ and $B_j=1$ for $j=m+1, \cdots,q$, then we get a known theorem due to Mathai and Saxena \cite{mathaisaxena}.\\
Similarly, other special cases can be considered. 

\section*{Concluding Remarks }
In this paper, in the beginning, we have obtained the Mellin transforms of the $ \hat{I}$ - function and Mellin transforms of the product of two $\hat{I}$ - functions. Thereafter, a general probability distribution involving $\hat{I}$-function has been introduced. Finally, we have established the distributions of the product and quotient of two independent random variables having densities in terms of the $\hat{I}$ - functions (which is a particular case of the Rathie's ${I}$ - function). Since $\hat{I}$- function is the most general function of one variable which includes not only the well known functions such as Inayat Hussain's $\bar{H}$-function\cite{inayat}, Fox's H-function\cite{fox, mathai} and Meijer's G-function\cite{mathai}, but also a large number of elementary functions, so on specializing the parameters therein, we can obtain distributions  of product and quotient of two independent random variables having different densities. It is hoped that the results presented in this paper could be of potential use in the area of statistics, applied mathematics,  mathematical physics and wireless communication. \\

{\it We conclude this paper by remarking that all the results established in the paper contain proper convergence   conditions.}
%=======

\end{document}